\newcommand{\bracket}[1]{\ensuremath{\left[ #1 \right]}}
\newcommand{\braces}[1]{\ensuremath{\left\{ #1 \right\}}}
\newcommand{\refeqn}[1]{(\ref{eqn:#1})}
\newcommand{\tr}[1]{\mbox{tr}\ensuremath{\negthickspace\bracket{#1}}}
\newcommand{\SO}{\ensuremath{\mathsf{SO(3)}}}
\newcommand{\so}{\ensuremath{\mathfrak{so}(3)}}
\renewcommand{\Re}{\ensuremath{\mathbb{R}}}
\newcommand{\Sph}{\ensuremath{\mathsf{S}}}
\newcommand{\D}{\ensuremath{\mathbf{D}}}
\newtheorem{prop}{Proposition}
\title{\LARGE \bf Geometric Stabilization of a Quadrotor UAV with a Payload\\ Connected by Flexible Cable}
\author{Farhad A. Goodarzi, Daewon Lee, and Taeyoung Lee$^{*}$
\thanks{Farhad A. Goodarzi, Daewon Lee, and Taeyoung Lee, Mechanical and Aerospace Engineering, The George
Washington University, Washington DC 20052
{\tt\small {\{fgoodarzi,daewonlee,tylee}\}@gwu.edu}}%
\thanks{$^*$This research has been supported in part by NSF under the grants CMMI-1243000 (transferred from 1029551), CMMI-1335008, and CNS-1337722.}}
\begin{document}

\maketitle

\pagestyle{empty}

\begin{abstract}
This paper deals with dynamics and control of a quadrotor UAV with a payload that is connected via a flexible cable, which is modeled as a system of serially-connected links. It is shown that a coordinate-free form of equations of motion can be derived for an arbitrary number of links according to Lagrangian mechanics on a manifold. A geometric nonlinear control system is also presented to asymptotically stabilize the position of the quadrotor while aligning the links to the vertical direction. These results will be particularly useful for aggressive load transportation that involves deformation of the cable. The desirable properties are illustrated by a numerical example and a preliminary experimental result. 
\end{abstract}

\section{Introduction}

Quadrotor unmanned aerial vehicles (UAV) have been envisaged for various applications such as surveillance or mobile sensor networks as well as for educational purposes. 
Areal transportation of a cable-suspended load has been studied traditionally for helicopters~\cite{CicKanJAHS95,BerPICRA09}. 
Recently, small-size single or multiple autonomous vehicles are considered for load transportation and deployment~\cite{PalCruIRAM12,MicFinAR11,MazKonJIRS10,MelShoDARSSTAR13}, and trajectories with minimum swing of payload are generated~\cite{ ZamStaJDSMC08, SchMurIICRA12, PalFieIICRA12}. 



However, these results are based on the common assumption that the cable connecting a quadrotor UAV to a payload is always taut. Therefore, they cannot be applied to aggressive, rapid load transportations where the cable is deformed or the tension along the cable is low, thereby restricting its applicability.
It is challenging to incorporate the effects of a deformable cable, since the dimension of the configuration space becomes infinite. Finite element approximation of a cable often yields complicated equations of motion that make dynamic analysis and controller design extremely difficult.

Recently, a coordinate-free form of the equations of motion for a chain pendulum connected a cart that moves on a horizontal plane is presented according to Lagrangian mechanics on a manifold~\cite{LeeLeoPICDC12}. By following the similar approach, in this paper, the cable is modeled as an arbitrary number of links with different sizes and masses that are serially-connected by spherical joints, as illustrated at Figure \ref{fig:Quad}. The resulting configuration manifold is the product of the special Euclidean group for the position and the attitude of the quadrotor, and a number of two-spheres that describe the direction of each link. We present Euler-Lagrange equations of the presented quadrotor model that are globally defined on the nonlinear configuration manifold.




\begin{figure}
\centerline{
	\setlength{\unitlength}{0.09\columnwidth}\scriptsize
\begin{picture}(5,7.5)(0,0)
\put(0,0){\includegraphics[width=0.45\columnwidth]{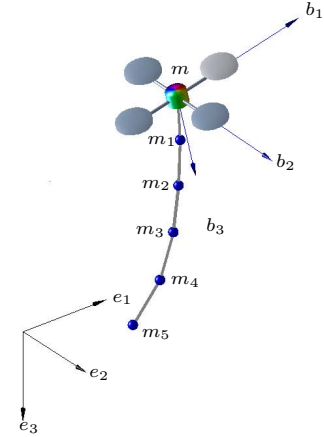}}
\put(2.7,6){\shortstack[c]{$m$}}
\put(2.3,4.8){\shortstack[c]{$m_{1}$}}
\put(2.2,4.05){\shortstack[c]{$m_{2}$}}
\put(2.1,3.25){\shortstack[c]{$m_{3}$}}
\put(2.7,2.4){\shortstack[c]{$m_{4}$}}
\put(2.2,1.5){\shortstack[c]{$m_{5}$}}
\put(1.3,0.8){\shortstack[c]{$e_{2}$}}
\put(1.7,2.1){\shortstack[c]{$e_{1}$}}
\put(0.1,-0.1){\shortstack[c]{$e_{3}$}}
\put(5,7){\shortstack[c]{$b_{1}$}}
\put(4.5,4.4){\shortstack[c]{$b_{2}$}}
\put(3.3,3.3){\shortstack[c]{$b_{3}$}}
\end{picture}
}
\caption{Quadrotor UAV with a cable-suspended load. Cable is modeled as a serial connection of arbitrary number of links (only 5 are illustrated).}\label{fig:Quad}
\end{figure}

The second part of this paper deals with nonlinear control system development. Quadrotor UAV is underactuated as the direction of the total thrust is always fixed relative to its body. By utilizing geometric control systems for quadrotor~\cite{LeeLeoPICDC10,LeeLeoAJC12,GooLeePECC13}, we show that the hanging equilibrium of the links can be asymptotically stabilized while translating the quadrotor to a desired position. In contrast to existing papers where the force and the moment exerted by the payload to the quadrotor are considered as disturbances, the control systems proposed in this paper explicitly consider the coupling effects between the cable/load dynamics and the quadrotor dynamics.

Another distinct feature is that the equations of motion and the control systems are developed directly on the nonlinear configuration manifold in a coordinate-free fashion. This yields remarkably compact expressions for the dynamic model and controllers, compared with local coordinates that often require symbolic computational tools due to complexity of multibody systems. Furthermore, singularities of local parameterization are completely avoided to generate agile maneuvers in a uniform way.



This paper is organized as follows. A dynamic model and control systems are presented at Sections \ref{sec:DM} through \ref{sec:CS}. The desirable properties of the proposed control system are illustrated by a numerical example at Section \ref{sec:NE} and preliminary experimental results are given at Section \ref{sec:ER}. 


\section{Dynamic Model of a Quadrotor with a Flexible Cable}\label{sec:DM}


Consider a quadrotor UAV with a payload that is connected via a chain of $n$ links, as illustrated at Figure \ref{fig:Quad}. The inertial frame is defined by the unit vectors $e_{1}=[1;0;0]$, $e_{2}=[0;1;0]$, and $e_{3}=[0;0;1]\in \Re^{3}$, and the third axis $e_{3}$ corresponds to the direction of gravity. Define a body-fixed frame $\{\vec{b}_{1},\vec{b}_{2},\vec{b}_{3}\}$ whose origin is located at the center of mass of the quadrotor, and its third axis $\vec b_3$ is aligned to the axis of symmetry. 

The location of the mass center, and the attitude of the quadrotor are denoted by $x\in\Re^3$ and $R\in\SO$, respectively, where the special orthogonal group is $\SO=\{R\in\Re^{3\times 3}\,|\, R^T R=I_{3\times 3},\;\mathrm{det}[R]=1\}$. A rotation matrix represents the linear transformation of a representation of a vector from the body-fixed frame to the inertial frame. 

The dynamic model of the quadrotor is identical to~\cite{LeeLeoPICDC10}. The mass and the inertia matrix of the quadrotor are denoted by $m\in\Re$ and $J\in\Re^{3\times 3}$, respectively. The quadrotor can generate a thrust $-fRe_3\in\Re^3$ with respect to the inertial frame, where $f\in\Re$ is the total thrust magnitude. It also generates a moment $M\in\Re^3$ with respect to its body-fixed frame. The pair $(f,M)$ is considered as control input of the quadrotor. 

Let $q_i\in\Sph^2$ be the unit-vector representing the direction of the $i$-th link, measured  from the quadrotor toward the payload, where the two-sphere is the manifold of unit-vectors in $\Re^3$, i.e., $\Sph^2=\{q\in\Re^3\,|\, \|q\|=1\}$. For simplicity, we assume that the mass of each link is concentrated at the outboard end of the link, and the point where the first link is attached to the quadrotor corresponds to the mass center of the quadrotor. The mass and the length of the $i$-th link are defined by $m_i$ and $l_i\in\Re$, respectively. Thus, the mass of the payload corresponds to $m_n$. The corresponding configuration manifold of this system is $\SO\times\Re^3\times (\Sph^2)^n$.

Next, we define the kinematics equations. Let $\Omega\in\Re^3$ be the angular velocity of the quadrotor represented with respect to the body-fixed frame, and let $\omega_i\in\Re^3$ be the angular velocity of the $i$-th link represented with respect to the inertial frame. The angular velocity is normal to the direction of the link, i.e., $q_i\cdot\omega_i=0$. The kinematics equations are given by
\begin{align}
\dot R & = R\hat\Omega,\label{eqn:Rdot}\\
\dot{q}_{i} & =\omega_{i}\times q_{i}=\hat{\omega}_{i}q_{i},\label{eqn:qidot}
\end{align}
where the hat map $\hat\cdot:\Re^3\rightarrow\so$ is defined by the condition that $\hat x y =x\times y$ for any $x,y\in\Re^3$, and it transforms a vector in $\Re^{3}$ to a $3\times 3$ skew-symmetric matrix. More explicitly, it is given by
\begin{align}\label{LinEOM}
\hat{x}=
\begin{bmatrix}
0&-x_{3}&x_{2}\\
x_{3}&0&-x_{1}\\
-x_{2}&x_{1}&0
\end{bmatrix}
\end{align}
for $x=[x_{1},x_{2},x_{3}]^{T}\in \Re^{3}$. 
The inverse of the hat map is denoted by the \textit{vee} map $\vee:\so\rightarrow\Re^3$. 

Throughout this paper, the 2-norm of a matrix $A$ is denoted by $\|A\|$, and the dot product is denoted by $x \cdot y = x^Ty$.


\subsection{Lagrangian}

We derive the equations of motion according to Lagrangian mechanics. The kinetic energy of the quadrotor is given by
\begin{align}
T_Q = \frac{1}{2}m\|\dot x\|^2 + \frac{1}{2} \Omega\cdot J\Omega.\label{eqn:TQ}
\end{align}
Let $x_i\in\Re^3$ be the location of $m_i$ in the inertial frame. It can be written as
\begin{align}\label{posvec}
x_{i}=x+\sum^{i}_{a=1}{l_{a}q_{a}}.
\end{align}
Then, the kinetic energy of the links is given by
\begin{align}
T_L & = \frac{1}{2} \sum_{i=1}^n m_i \|\dot x+\sum^{i}_{a=1}{l_{a}\dot q_{a}}\|^2\nonumber\\
& = \frac{1}{2}\sum_{i=1}^n m_i \|\dot x\| + \dot x\cdot \sum_{i=1}^n\sum_{a=i}^n m_a l_i \dot q_i
+\frac{1}{2}\sum_{i=1}^n m_i \|\sum_{a=1}^i l_a \dot q_a\|^2.
\label{eqn:TL}
\end{align}
From \refeqn{TQ} and \refeqn{TL}, the total kinetic energy can be written as
\begin{align}
T & =\frac{1}{2}M_{00}\|\dot{x}\|^{2}+\dot{x}\cdot\sum^{n}_{i=1}{M_{0i}\dot{q}_{i}}+\frac{1}{2}\sum^{n}_{i,j=1}{M_{ij}\dot{q}_{i}\cdot\dot{q}_{j}}\nonumber\\
&\quad +\frac{1}{2}\Omega^{T}J\Omega,\label{eqn:KE}
\end{align}
where the inertia values $M_{00},M_{0i},M_{ij}\in\Re$ are given by
\begin{gather}
M_{00}=m+\sum_{i=1}^n m_i,\quad M_{0i}=\sum_{a=i}^n m_a l_i,\quad M_{i0}=M_{0i},\nonumber\\
M_{ij}=\braces{\sum_{a=\max\{i,j\}}^n m_a} l_i l_j,\label{eqn:Mij}
\end{gather}
for $1\leq i,j\leq n$. The gravitational potential energy is given by
\begin{align}
V & = -mgx\cdot e_3 - \sum_{i=1}^n m_i g x_i\cdot e_3\nonumber\\
& = -\sum^{n}_{i=1}\sum^{n}_{a=i}m_{a}gl_{i}e_{3}\cdot q_{i}-M_{00}ge_{3}\cdot x,\label{eqn:PE}
\end{align}
From \refeqn{KE} and \refeqn{PE}, the Lagrangian is $L=T-V$.

\subsection{Euler-Lagrange equations}
Coordinate-free form of Lagrangian mechanics on the two-sphere $\Sph^2$ and the special orthogonal group $\SO$ for various multibody systems has been studied in~\cite{Lee08,LeeLeoIJNME08}. The key idea is representing the infinitesimal variation of $R\in\SO$ in terms of the exponential map:
\begin{align}
\delta R = \frac{d}{d\epsilon}\bigg|_{\epsilon = 0} \exp R(\epsilon \hat\eta) = R\hat\eta,\label{eqn:delR}
\end{align}
for $\eta\in\Re^3$. The corresponding variation of the angular velocity is given by $\delta\Omega=\dot\eta+\Omega\times\eta$. Similarly, the infinitesimal variation of $q_i\in\Sph^2$ is given by
\begin{align}
\delta q_i = \xi_i\times q_i,\label{eqn:delqi}
\end{align}
for $\xi_i\in\Re^3$ satisfying $\xi_i\cdot q_i=0$. This lies in the tangent space as it is perpendicular to $q_{i}$. Using these, the Euler-Lagrange equations can be derived as follows.
\begin{prop}\label{prop:FDM}
Consider a quadrotor with a cable-suspended payload whose Lagrangian is given by \refeqn{KE} and \refeqn{PE}. The Euler-Lagrange equations on $\Re^3\times\SO\times(\Sph^2)^n$ are as follows:
\begin{gather}
M_{00}\ddot{x}+\sum^{n}_{i=1}{M_{0i}\ddot{q}_{i}}=-fRe_{3}+M_{00}ge_{3},\label{eqn:xddot}\\
M_{ii}\ddot q_i  -\hat q_i^2 (M_{i0}\ddot x + \sum_{\substack{j=1\\j\neq i}}^n M_{ij}\ddot q_j)\nonumber\\
=- M_{ii}\|\dot q_i\|^2 q_i-\sum_{a=i}^n m_a gl_i\hat q_i^2 e_3,\label{eqn:qddot}\\
J\dot{\Omega}+\hat{\Omega}J\Omega=M,\label{eqn:Wdot}
\end{gather}
where $M_{ij}$ is defined at \refeqn{Mij}. Equations \refeqn{xddot} and \refeqn{qddot} can be rewritten in a matrix form as follows:
\begin{align}
&\begin{bmatrix}%
    M_{00} & M_{01} & M_{02} & \cdots & M_{0n} \\
    -\hat q_1^2 M_{10} & M_{11}I_{3} & -M_{12} \hat q_1^2 & \cdots & -M_{1n}\hat q_1^2\\%
    -\hat q_2^2 M_{20} & -M_{21} \hat q_2^2 & M_{22} I_{3} & \cdots & -M_{2n} \hat q_2^2\\%
    \vdots & \vdots & \vdots & & \vdots\\
    -\hat q_n^2 M_{n0} & -M_{n1} \hat q_n^2 & -M_{n2}\hat q_n^2 & \cdots & M_{nn} I_{3}
    \end{bmatrix}%
    \begin{bmatrix}
    \ddot x \\ \ddot q_1 \\ \ddot q_2 \\ \vdots \\ \ddot q_n
    \end{bmatrix}\nonumber\\
 &\qquad\quad=   \begin{bmatrix}
    -fRe_3 +M_{00}ge_3\\
    -\|\dot q_1\|^2M_{11} q_1 -\sum_{a=1}^n m_a gl_1\hat q_1^2 e_3\\
    -\|\dot q_2\|^2M_{22} q_2 -\sum_{a=2}^n m_a gl_2\hat q_2^2 e_3\\
    \vdots\\
    -\|\dot q_n\|^2M_{nn} q_n - m_n gl_n\hat q_n^2 e_3
    \end{bmatrix}.\label{eqn:ELm}
\end{align}
Or equivalently, it can be written in terms of the angular velocities as
\begin{gather}
\begin{bmatrix}%
    M_{00} & -M_{01}\hat q_1 & -M_{02}\hat q_2 & \cdots & -M_{0n}\hat q_n\\
    \hat q_1 M_{10} & M_{11}I_{3} & -M_{12} \hat q_1 \hat q_2 & \cdots & -M_{1n}\hat q_1 \hat q_n\\%
    \hat q_2 M_{20} &-M_{21} \hat q_2\hat q_1 & M_{22} I_{3} & \cdots & -M_{2n} \hat q_2 \hat q_n\\%
    \vdots & \vdots & \vdots & & \vdots\\
    \hat q_n M_{n0} &-M_{n1} \hat q_n \hat q_1 & -M_{n2}\hat q_n \hat q_2 & \cdots & M_{nn} I_{3}
    \end{bmatrix}%
    \begin{bmatrix}
    \ddot x \\ \dot \omega_1 \\ \dot \omega_2 \\ \vdots \\ \dot \omega_n
    \end{bmatrix}\nonumber\\
    =
    \begin{bmatrix}
    \sum_{j=1}^n M_{0j} \|\omega_j\|^2 q_j-fRe_3+M_{00}ge_3\\
    \sum_{j=2}^n M_{1j}\|\omega_j\|^2\hat q_1 q_j +\sum_{a=1}^n m_a gl_1\hat q_1 e_3\\
    \sum_{j=1,j\neq 2}^n M_{2j}\|\omega_j\|^2\hat q_2 q_j +\sum_{a=2}^n m_a gl_2\hat q_2 e_3\\
    \vdots\\
    \sum_{j=1}^{n-1} M_{nj}\|\omega_j\|^2\hat q_n q_j + m_n gl_n\hat q_n e_3\\
    \end{bmatrix},\label{eqn:ELwm}\\
\dot q_i = \omega_i\times q_i.\label{eqn:ELwm2}
\end{gather}

\end{prop}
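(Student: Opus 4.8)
The plan is to obtain all of \refeqn{xddot}--\refeqn{ELwm2} from Hamilton's principle applied to $L=T-V$ with $T,V$ as in \refeqn{KE} and \refeqn{PE}, together with the Lagrange--d'Alembert forcing for the two non-conservative generalized forces: the thrust $-fRe_3$, work-conjugate to $\delta x$, and the control moment $M$, work-conjugate to the attitude variation vector $\eta$ introduced in \refeqn{delR}. Because the configuration manifold $\Re^3\times\SO\times(\Sph^2)^n$ is a product, the variations $\delta x$, $\delta R$ (through $\eta$) and $\delta q_i$ (through $\xi_i$, see \refeqn{delR}--\refeqn{delqi}) may be taken one block at a time, all vanishing at the endpoints $t_0,t_1$.

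First I would settle \refeqn{Wdot}. Neither $T$ nor $V$ involves $R$ except through the decoupled rotational term $\tfrac12\Omega\cdot J\Omega$ in \refeqn{KE}; with $\delta R=R\hat\eta$, the induced variation $\delta\Omega=\dot\eta+\Omega\times\eta$, one integration by parts, the identity $J\Omega\cdot(\Omega\times\eta)=-\eta\cdot(\Omega\times J\Omega)$, and the virtual work $\int M\cdot\eta\,dt$, one recovers the rigid-body equation $J\dot\Omega+\hat\Omega J\Omega=M$ exactly as in \cite{Lee08,LeeLeoIJNME08}. Equation \refeqn{xddot} is then immediate: from \refeqn{KE} and \refeqn{PE}, $\partial L/\partial\dot x=M_{00}\dot x+\sum_i M_{0i}\dot q_i$ and $\partial L/\partial x=-\partial V/\partial x=M_{00}ge_3$, so with external force $-fRe_3$ the Lagrange--d'Alembert equation reads $M_{00}\ddot x+\sum_i M_{0i}\ddot q_i=-fRe_3+M_{00}ge_3$.

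The substantive step is the variation in $q_i$. Using \refeqn{delqi}, $\delta q_i=\xi_i\times q_i$ and $\delta\dot q_i=\tfrac{d}{dt}(\xi_i\times q_i)$ with $\xi_i\cdot q_i=0$. From \refeqn{KE} the $q_i$-dependent terms contribute $\delta T=(M_{i0}\dot x+\sum_{j=1}^n M_{ij}\dot q_j)\cdot\delta\dot q_i$, where the unit coefficient on the $j\ne i$ cross terms results from the symmetry $M_{ij}=M_{ji}$ of \refeqn{Mij}; from \refeqn{PE}, $\delta V=-\big(\sum_{a=i}^n m_a g l_i\big)e_3\cdot\delta q_i$. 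Writing $P_i=M_{i0}\dot x+\sum_j M_{ij}\dot q_j$ and integrating $\int P_i\cdot\delta\dot q_i\,dt$ by parts (the boundary term vanishing) gives $\int_{t_0}^{t_1}\big(-\dot P_i+\sum_{a=i}^n m_a g l_i\,e_3\big)\cdot\delta q_i\,dt=0$. Since $\xi_i\mapsto\xi_i\times q_i$ is onto the tangent plane $q_i^{\perp}$ and the links carry no external force, $-\dot P_i+(\sum_{a=i}^n m_a g l_i)e_3$ must be parallel to $q_i$, i.e. $\hat q_i^2\big(\dot P_i-(\sum_{a=i}^n m_a g l_i)e_3\big)=0$ using $-\hat q_i^2=I-q_iq_i^T$ on $\Sph^2$. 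Expanding $\dot P_i=M_{i0}\ddot x+\sum_j M_{ij}\ddot q_j$ and inserting the kinematic identity $q_i\cdot\ddot q_i=-\|\dot q_i\|^2$ (two derivatives of $q_i\cdot q_i=1$) into the $j=i$ term through $-\hat q_i^2\ddot q_i=\ddot q_i+\|\dot q_i\|^2 q_i$ yields precisely \refeqn{qddot}.

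Equation \refeqn{ELm} is then the vertical stacking of \refeqn{xddot} and \refeqn{qddot} over $i=1,\dots,n$, with $M_{0i}=M_{i0}$. For \refeqn{ELwm}--\refeqn{ELwm2} I would substitute \refeqn{qidot} and its derivative $\ddot q_i=\dot\omega_i\times q_i-\|\omega_i\|^2 q_i$ (using $\omega_i\cdot q_i=0$), along with $q_i\cdot\dot\omega_i=0$ and $\|\dot q_i\|=\|\omega_i\|$, into the unsquared form $\hat q_i\big(\dot P_i-(\sum_{a\ge i}m_a g l_i)e_3\big)=0$, which is equivalent to \refeqn{qddot} because $\hat q_i$ is invertible on $q_i^{\perp}$; the identities $\hat q_i\ddot q_i=\dot\omega_i$ and $\hat q_i\ddot q_j=-\hat q_i\hat q_j\dot\omega_j-\|\omega_j\|^2\hat q_i q_j$ then reproduce the displayed coefficient matrix and right-hand side. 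None of these computations is deep; the place where care is needed is the $q_i$-variation --- correctly reading off $\delta T$ from the double sum with its symmetry, recognizing that the constrained variation confines the equation to $q_i^{\perp}$ (whence the $\hat q_i^2$ projector), and consistently using the constraints $q_i\cdot\dot q_i=0$, $q_i\cdot\ddot q_i=-\|\dot q_i\|^2$ and $\omega_i\cdot q_i=0$ when passing to the explicit and angular-velocity forms.
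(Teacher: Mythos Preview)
Your proposal is correct and follows essentially the same route as the paper: vary the action on each factor of $\Re^3\times\SO\times(\Sph^2)^n$, integrate by parts, invoke Lagrange--d'Alembert for the thrust and moment, and then use $-\hat q_i^2\ddot q_i=\ddot q_i+\|\dot q_i\|^2 q_i$ and $\ddot q_i=-\hat q_i\dot\omega_i-\|\omega_i\|^2 q_i$ to obtain the explicit and angular-velocity forms. The only cosmetic difference is that you integrate $\int P_i\cdot\delta\dot q_i\,dt$ by parts directly in $\delta q_i$, whereas the paper first expands $\delta\dot q_i=\dot\xi_i\times q_i+\xi_i\times\dot q_i$ and integrates by parts in $\xi_i$; both lead to the same projected equation $\hat q_i^2\dot P_i=(\sum_{a\ge i}m_a g l_i)\hat q_i^2 e_3$.
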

\begin{proof}
See Appendix A.
\end{proof}
These provide a coordinate-free form of the equations of motion for the presented quadrotor UAV that is uniformly defined for any number of links $n$, and that is globally defined on the nonlinear configuration manifold. Compared with equations of motion derived in terms of local coordinates, such as Euler-angles, these provide a compact form of equations that are suitable for control system design.

\section{Control System Design for a Simplified Dynamic Model}

\subsection{Control Problem Formulation}

Let $x_d\in\Re^3$ be a fixed desired location of the quadrotor UAV. Assuming that all of the links are pointing downward, i.e., $q_i=e_3$, the resulting location of the payload is given by $x_n=x_d +\sum_{i=1}^n l_i e_3$. We wish to design the control force $f$ and the control moment $M$ such that this hanging equilibrium configuration at the desired location becomes asymptotically stable. 

\subsection{Simplified Dynamic Model}

For the given equations of motion \refeqn{xddot} for $x$, the control force is given by $-fRe_3$. This implies that the total thrust magnitude $f$ can be arbitrarily chosen, but the direction of the thrust vector is always along the third body-fixed axis. Also, the rotational attitude dynamics of the quadrotor is not affected by the translational dynamics of the quadrotor or the dynamics of links.

In this section, we replace the term  $-fRe_3$ of \refeqn{xddot} by a fictitious control input $u\in\Re^3$, and design an expression for $u$ to asymptotically stabilize the desired equilibrium. This is equivalent to assuming that the attitude of the quadrotor can be instantaneously changed. The effects of the attitude dynamics are studied at the next section.

In short, the equations of motion for the simplified dynamic model considered in the section are given by
\begin{align}
M_{00}\ddot{x}+\sum^{n}_{i=1}{M_{0i}\ddot{q}_{i}}=u+M_{00}ge_{3},\label{eqn:xddot_sim}
\end{align}
and \refeqn{qddot}.

\subsection{Linear Control System}\label{sec:LCS}
The fictitious control input is designed from the linearized dynamics about the desired hanging equilibrium. The variation of $x$ and $u$ are given by
\begin{align}
\delta x = x - x_d,\quad \delta u = u - M_{00}g e_3.\label{eqn:delxLin}
\end{align}
From \refeqn{delqi}, the variation of $q_i$ from the equilibrium can be written as
\begin{align}
\delta q_i = \xi_i\times e_3,\label{eqn:delqLin}
\end{align}
where $\xi_i\in\Re^3$ with $\xi_i\cdot e_3=0$. The variation of $\omega_i$ is given by $\delta\omega\in\Re^3$ with $\delta\omega_i \cdot e_3=0$. Therefore, the third element of each of $\xi_i$ and $\delta\omega_i$ for any equilibrium configuration is zero, and they are omitted in the following linearized equation, i.e., the state vector of the linearized equation is composed of $C^T\xi_i\in\Re^2$, where $C=[e_1,e_2]\in\Re^{3\times 2}$. 

\newcommand{\Mb}{\mathbf{M}}
\newcommand{\Kb}{\mathbf{K}}
\newcommand{\Bb}{\mathbf{B}}
\newcommand{\xb}{\mathbf{x}}
\newcommand{\ub}{\mathbf{u}}
\newcommand{\vb}{\mathbf{v}}
\newcommand{\Gb}{\mathbf{G}}

\begin{prop}\label{prop:FDMM}
The linearized equations of the simplified dynamic model \refeqn{xddot_sim} and \refeqn{qddot} can be written as follows:
\begin{gather}
\Mb\ddot \xb  + \Gb\xb = \Bb \delta u,\label{eqn:Lin}
\end{gather}
or equivalently 
\begin{align*}
\begin{bmatrix} \Mb_{xx} & \Mb_{xq}\\ \Mb_{qx} & \Mb_{qq} \end{bmatrix}
\begin{bmatrix}  \delta\ddot x \\ \ddot \xb_q\end{bmatrix}
+
\begin{bmatrix} 0_3 & 0_{3\times 2n}\\ 0_{2n\times 3} & \Gb_{qq}\end{bmatrix}
\begin{bmatrix}  \delta x \\ \xb_q\end{bmatrix}
=
\begin{bmatrix} I_3 \\ 0_{2n\times 3}\end{bmatrix}
\delta u,
\end{align*}
where the corresponding sub-matrices are defined as
\begin{align*}
\xb_q & = [C^T \xi_1;\,\ldots\,;\,C^T \xi_n],\\
\Mb_{xx} &= M_{00}I_{3},\\
\Mb_{xq} &= \begin{bmatrix}
-M_{01}\hat e_3C & -M_{02}\hat e_3C & \cdots & -M_{0n}\hat e_3C
\end{bmatrix},\\
\Mb_{qx} & = \Mb_{xq}^T,\\
\Mb_{qq} &=
    \begin{bmatrix}%
M_{11}I_{2} & M_{12} I_2 & \cdots & M_{1n}I_2\\%
M_{21} I_2 & M_{22} I_{2} & \cdots & M_{2n}I_2\\%
\vdots & \vdots & & \vdots\\
M_{n1}I_2 & M_{n2}I_2 & \cdots & M_{nn} I_{2}
    \end{bmatrix},\\
\Gb_{qq} & = \mathrm{diag}[\sum_{a=1}^n m_a gl_1 I_2,\, \ldots, m_ngl_nI_2].
\end{align*}
\end{prop}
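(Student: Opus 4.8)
\medskip
\noindent\textbf{Proof proposal.}
The plan is to linearize the two vector equations \refeqn{xddot_sim} and \refeqn{qddot} directly about the hanging equilibrium $x=x_d$, $\dot x=0$, $q_i=e_3$, $\dot q_i=0$, $u=M_{00}ge_3$, and then re-express the result in the reduced perturbation state $\delta x\in\Re^3$, $\xb_q=[C^{T}\xi_1;\dots;C^{T}\xi_n]\in\Re^{2n}$. For the setup, take $\delta x=x-x_d$ and $\delta u=u-M_{00}ge_3$ as in \refeqn{delxLin}, and perturb the link directions as in \refeqn{delqLin}, $\delta q_i=\xi_i\times e_3=-\hat e_3\xi_i$ with $\xi_i\cdot e_3=0$, so that to first order $\delta\dot q_i=-\hat e_3\dot\xi_i$ and $\delta\ddot q_i=-\hat e_3\ddot\xi_i$. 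Since the $e_3$-component of $\xi_i$ never enters, write $\xi_i=C\zeta_i$ with $\zeta_i=C^{T}\xi_i\in\Re^2$, and use repeatedly the elementary identities $\hat e_3 C=CJ$ with $J=\bigl[\begin{smallmatrix}0&-1\\1&0\end{smallmatrix}\bigr]$ (so $C^{T}\hat e_3 C=J$, $J^2=-I_2$), $-\hat e_3^2=I_3-e_3e_3^{T}=CC^{T}$, and $\hat e_3^3=-\hat e_3$.

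Linearizing \refeqn{xddot_sim} is immediate: the constant term $M_{00}ge_3$ cancels the equilibrium value of $u$, leaving $M_{00}\delta\ddot x+\sum_i M_{0i}\delta\ddot q_i=\delta u$, and substituting $\delta\ddot q_i=-\hat e_3 C\ddot\zeta_i$ yields the first block row of \refeqn{Lin} with $\Mb_{xx}=M_{00}I_3$, $\Mb_{xq}=[-M_{01}\hat e_3 C,\dots,-M_{0n}\hat e_3 C]$ and forcing $\Bb=[I_3;\,0_{2n\times 3}]$. The link equations \refeqn{qddot} take a little more care: evaluating each term to first order about the equilibrium, where $\ddot x=0$, $\ddot q_j=0$, $\dot q_i=0$, (i) $-M_{ii}\|\dot q_i\|^2 q_i$ is second order and drops; (ii) in $-\hat q_i^2(M_{i0}\ddot x+\sum_{j\ne i}M_{ij}\ddot q_j)$ the contribution from varying $\hat q_i^2$ multiplies the vanishing equilibrium value of the bracket, so only $-\hat e_3^2(M_{i0}\delta\ddot x+\sum_{j\ne i}M_{ij}\delta\ddot q_j)$ survives; (iii) linearizing $\hat q_i^2 e_3=q_i(q_i\cdot e_3)-e_3$ about $q_i=e_3$ and using $\delta q_i\cdot e_3=0$ gives variation $\delta q_i$ for that term. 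Substituting $\delta\ddot q_j=-\hat e_3 C\ddot\zeta_j$ and $\delta q_i=-\hat e_3 C\zeta_i$, collapsing the double cross products via $\hat e_3^3=-\hat e_3$, and absorbing the $M_{ii}$ term into the sum over $j$, the $i$-th equation reduces to $M_{i0}CC^{T}\delta\ddot x-\hat e_3 C\sum_j M_{ij}\ddot\zeta_j=\sum_{a\ge i}m_a gl_i\,\hat e_3 C\zeta_i$.

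Left-multiplying this reduced equation by $C^{T}$ (using $C^{T}\hat e_3 C=J$) and then by $J^{-1}=-J$ clears every $\hat e_3$/$J$ factor and gives $\Mb_{qx,i}\delta\ddot x+\sum_j M_{ij}\ddot\zeta_j+\sum_{a\ge i}m_a gl_i\zeta_i=0$, where $\Mb_{qx,i}=M_{0i}JC^{T}=(-M_{0i}\hat e_3 C)^{T}$. Stacking over $i=1,\dots,n$ produces the $q$-block rows with $\Mb_{qq}$ the matrix of blocks $M_{ij}I_2$, $\Mb_{qx}=\Mb_{xq}^{T}$, $\Gb_{qq}=\mathrm{diag}[\sum_a m_a gl_1 I_2,\dots,m_n gl_n I_2]$, and no $\delta u$ forcing; together with the first row this is exactly \refeqn{Lin}. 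As a consistency check — and an alternative route to the same answer — $\Mb$ and $\Gb$ must come out as the Hessians at the equilibrium of the kinetic energy \refeqn{KE} and potential \refeqn{PE}, which makes the symmetry $\Mb=\Mb^{T}$ (in particular $\Mb_{qx}=\Mb_{xq}^{T}$) and the block-diagonal form of $\Gb$ automatic.

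I expect the only genuine work to be the projection step for the link equations: tracking the $90^{\circ}$-rotation factors $\hat e_3$ (equivalently $J$) and their signs so that they cancel and the coupling blocks emerge as the transpose pair $\Mb_{xq},\Mb_{qx}$; and, relatedly, noticing that although each equation \refeqn{qddot} is a priori $\Re^3$-valued, all its first-order terms lie in $\mathrm{span}\{e_1,e_2\}$ at the equilibrium, which is precisely why the linearized state carries $C^{T}\xi_i\in\Re^2$ per link and the reduced system has dimension $2n+3$.
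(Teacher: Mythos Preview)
Your argument is correct and follows the same overall strategy as the paper: linearize the Euler--Lagrange equations at the hanging equilibrium and project onto the reduced $(3+2n)$-dimensional state. The only difference is cosmetic: the paper linearizes the angular-velocity form \refeqn{ELwm} after first establishing the kinematic identity $\dot\xi_i=\delta\omega_i$, whereas you linearize the $\ddot q_i$ form \refeqn{qddot} directly and carry the bookkeeping with $\hat e_3$ and $J$. Your route requires tracking a few more cross-product identities but is otherwise equivalent, and your closing remark that $\Mb$ and $\Gb$ are the Hessians of \refeqn{KE} and \refeqn{PE} is a nice sanity check that the paper does not make explicit.
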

\begin{proof}
See Appendix A.
\end{proof}

For the linearized dynamics \refeqn{Lin}, the following control system is chosen:
\begin{align}
\delta u & = -k_{x}\delta{x}-k_{\dot{x}}\delta\dot{x}-\sum_{a=1}^{n}{k_{q_{i}}C^{T}(e_3\times q_{i})}-k_{\omega_{i}}C^{T}\delta\omega_{i}\nonumber\\
& = -K_x \xb - K_{\dot x} \dot \xb,\label{eqn:delu}
\end{align}
for controller gains $K_x =[k_xI_3,k_{q_1}I_{3\times 2},\ldots k_{q_n}I_{3\times 2}]\in\Re^{3\times (3+2n)}$ and $K_{\dot x} =[k_{\dot x}I_3,k_{\omega_1}I_{3\times 2},\ldots k_{\omega_n}I_{3\times 2}]\in\Re^{3\times (3+2n)}$. Provided that \refeqn{Lin} is controllable, we can choose the controller gains $K_x,K_{\dot x}$ such that the equilibrium is asymptotically stable for the linearized equation \refeqn{Lin}. Then, the equilibrium becomes asymptotically stable for the nonlinear Euler-Lagrange equation \refeqn{xddot_sim} and \refeqn{qddot}~\cite{Kha96}.

%

\section{Controller Design for a Quadrotor with a Flexible Cable}\label{sec:CS}

The control system designed in the previous section is generalized to the full dynamic model that includes the attitude dynamics. The central idea is that the attitude $R$ of the quadrotor is controlled such that its total thrust direction $-Re_3$, that corresponds to the third body-fixed axis, asymptotically follows the direction of the fictitious control input $u$. By choosing the total thrust magnitude properly, we can guarantee  asymptotical stability for the full dynamic model. 


\subsection{Controller Design}
Let $A\in\Re^3$ be the ideal total thrust of the quadrotor system that asymptotically stabilize the desired equilibrium. From \refeqn{delu}, we have 
\begin{align}
A= M_{00}ge_3 + \delta u = -K_{x} \xb-K_{\dot{x}}\dot\xb + M_{00}ge_3.\label{eqn:A}
\end{align}
The desired direction of the third body-fixed axis is given by
\begin{align}
b_{3_c} = - \frac{A}{\|A\|}.\label{eqn:b3c}
\end{align}
This provides a two-dimensional constraint for the desired attitude of quadrotor, and there is additional one-dimensional degree of freedom that corresponds to rotation about the third body-fixed axis, i.e., yaw angle. A desired direction of the first body-fixed axis, namely $b_{1_d}\in\Sph^2$ is introduced to resolve it, and it is projected onto the plane normal to $b_{3_c}$. The desired direction of the second body-fixed axis is chosen to constitute an orthonormal frame. More explicitly, the desired attitude is given by
\begin{align}
R_c = \bracket{-\frac{\hat b_{3_c}^2 b_{1_d}}{\|\hat b_{3_c}^2 b_{1_d}\|},\;
 \frac{\hat b_{3_c}b_{1_d}}{\|\hat b_{3_c}b_{1_d}\|},\; b_{3_c}},
\end{align}
which is guaranteed to lie in $\SO$ by construction, assuming that $b_{1_d}$ is not parallel to $b_{3_c}$. The desired angular velocity is obtained by the attitude kinematics equation:
\begin{align}
\Omega_c = (R_c^T \dot R_c)^\vee.
\end{align}

Next, we introduce the tracking error variables for the attitude and the angular velocity as follows:
\begin{align}
e_{R}&=\frac{1}{2}(R_{c}^{T}R-R^{T}R_{c})^{\vee},\\
e_{\Omega}&=\Omega-R^{T}R_{c}\Omega_{c}.
\end{align}

The thrust magnitude and the moment vector of quadrotor are chosen as 
\begin{align}
f  & = -A\cdot Re_3,\label{eqn:fi}\\
M & = -\frac{k_R}{\epsilon^2}e_{R} -\frac{k_{\Omega}}{\epsilon}e_{\Omega} +\Omega\times J\Omega\nonumber\\
&\quad -J(\hat\Omega R^T R_{c} \Omega_{c} - R^T R_{c}\dot\Omega_{c}),\label{eqn:Mi}
\end{align}
where $\epsilon,k_R,k_\Omega$ are positive constants.

\begin{prop}\label{prop:FDMMM}
Consider the full dynamics model defined by \refeqn{xddot}, \refeqn{qddot}, \refeqn{Wdot}, \refeqn{Rdot}, and \refeqn{qidot}. Control inputs are designed as \refeqn{fi} and \refeqn{Mi}, where the desired control force is given by \refeqn{A}. Then, there exist $\epsilon^\star > 0$, such that for all $\epsilon<\epsilon^\star$, the
desired equilibrium is exponentially stable.
\end{prop}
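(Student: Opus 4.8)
The plan is a two-time-scale (singular-perturbation) argument built on the fact that the chosen thrust magnitude makes the physical control force asymptotically reproduce the fictitious input of Section~3. When the attitude is exactly at its commanded value, $R = R_c$, we have $Re_3 = b_{3_c} = -A/\norm{A}$ and $f = -A\cdot Re_3 = \norm{A}$, so the physical control force satisfies $-fRe_3 = A = M_{00}ge_3 + \delta u$, i.e.\ the translational/link dynamics \refeqn{xddot}, \refeqn{qddot} reduce exactly to the simplified model \refeqn{xddot_sim}, \refeqn{qddot}, whose linearization \refeqn{Lin} is rendered exponentially stable by the gains $K_x, K_{\dot x}$ of \refeqn{delu} (Proposition~\ref{prop:FDMM} and the remark following \refeqn{delu}). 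The deviation from this ideal situation is entirely encoded in the thrust-direction error $\Delta = -fRe_3 - A$, and the first key step is to show that, in a neighborhood of the equilibrium where $\norm{A}$ is bounded and bounded away from zero (so that $b_{3_c}$, $R_c$, $\Omega_c$, $\dot\Omega_c$ are well-defined and smooth; this also uses that $b_{1_d}$ is not parallel to $b_{3_c}$), one has $\norm{\Delta}\le c_\Delta\,\norm{e_R}$. This follows from $f=-A\cdot Re_3$ together with the identity relating $Re_3$, $b_{3_c}=R_ce_3$ and $e_R$, since $\Delta$ vanishes when $e_R=0$.

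Next I would treat the attitude subsystem. Substituting $M$ from \refeqn{Mi} into \refeqn{Wdot}, \refeqn{Rdot} and using the attitude error configuration function $\Psi = \tfrac12\trs[I - R_c^T R]$ (for which $\dot\Psi = e_R\cdot e_\Omega$ and $\tfrac12\norm{e_R}^2 \le \Psi \le \tfrac{1}{2-\psi_1}\norm{e_R}^2$ on $\{\Psi<\psi_1\}$ with $\psi_1<1$), I would take a Lyapunov function $\mathcal{V}_2 = \tfrac12 e_\Omega\cdot J e_\Omega + \tfrac{k_R}{\epsilon^2}\Psi + \tfrac{c_2}{\epsilon}e_R\cdot J e_\Omega$ with $c_2>0$ small. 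On $\{\Psi<\psi_1\}$ this is positive definite in $(e_R,e_\Omega)$ with $\epsilon$-dependent quadratic bounds, and a direct computation gives $\dot{\mathcal{V}}_2 \le -\tfrac{c_3}{\epsilon}\norm{(e_R,e_\Omega)}^2 + (\text{forcing terms})$. The forcing enters through $\dot R_c$, $\Omega_c$, $\dot\Omega_c$, which depend on $A$, $\dot A$, $\ddot A$, hence on $\delta x$, $\delta\dot x$, the link variables, and $\ddot x$; eliminating $\ddot x$ via \refeqn{ELm} (or \refeqn{ELwm}) and using that $\Delta$ is itself bounded by $\norm{e_R}$, the forcing is bounded by $c_4\norm{z}\,\norm{(e_R,e_\Omega)}$ up to higher order, where $z$ denotes the slow-state error vector.

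For the slow subsystem, let $z = (\delta x,\ \delta\dot x,\ C^T\xi_1,\ C^T\delta\omega_1,\ \ldots,\ C^T\xi_n,\ C^T\delta\omega_n)$; its closed-loop dynamics with $u = -fRe_3 = A + \Delta$ take the form $\dot z = \mathcal{A}z + g(z) + \mathcal{B}\Delta$, where $\mathcal{A}$ is Hurwitz by Proposition~\ref{prop:FDMM} and the gain choice, $g$ collects higher-order terms, and $\norm{\mathcal{B}\Delta}\le c_\Delta\norm{e_R}$ locally. Taking $P\succ 0$ with $\mathcal{A}^TP + P\mathcal{A} = -I$ and $\mathcal{V}_1 = z^TPz$, one obtains $\dot{\mathcal{V}}_1 \le -\tfrac12\norm{z}^2 + c_5\norm{z}\,\norm{e_R}$ near the equilibrium. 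The composite function $\mathcal{V} = \mathcal{V}_1 + \mathcal{V}_2$ then satisfies, on a sublevel set contained in that neighborhood and in $\{\Psi<\psi_1\}$, an estimate $\dot{\mathcal{V}} \le -W(\norm{z},\norm{(e_R,e_\Omega)})$ with $W$ a quadratic form whose coefficient matrix has diagonal entries of order $1$ and order $\tfrac1\epsilon$ and an $O(1)$ off-diagonal entry; hence $W$ is positive definite once $\epsilon<\epsilon^\star$ for a suitable $\epsilon^\star>0$. Together with quadratic bounds $\underline{c}\norm{(z,e_R,e_\Omega)}^2 \le \mathcal{V} \le \overline{c}(\epsilon)\norm{(z,e_R,e_\Omega)}^2$ and a standard argument that trajectories starting near the equilibrium remain in the sublevel set, this gives local exponential stability of the desired equilibrium.

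The main obstacle is the coupling bookkeeping in the two steps above: (i) proving $\norm{\Delta} = \norm{-fRe_3 - A}\le c_\Delta\norm{e_R}$ cleanly from $f=-A\cdot Re_3$ and the geometry of $Re_3$, $b_{3_c}$, $R_c$, $e_R$; and (ii) bounding the attitude-error forcing by $\norm{z}$ after substituting the equations of motion for $\ddot x$ (and, if $\dot\Omega_c$ is kept in full generality, differentiating them once). Everything else — the quadratic estimates for $\mathcal{V}_1$, $\mathcal{V}_2$ and the final $\epsilon$-domination of the cross terms — is the routine composite-Lyapunov computation. As an alternative, one may instead linearize the entire closed loop about the equilibrium, observe that in $\epsilon$-scaled attitude coordinates it has the standard singularly-perturbed block form with Hurwitz slow block (the system of Proposition~\ref{prop:FDMM}) and Hurwitz fast block (the attitude error dynamics), and invoke the classical singular-perturbation stability theorem.
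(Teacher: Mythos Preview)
Your proposal is correct and follows the same singular-perturbation strategy as the paper. The only difference is one of packaging: the paper, after exhibiting the same attitude-error Lyapunov function (your $\mathcal{V}_2$, their $\mathcal{W}$) to establish exponential stability of the boundary-layer system and after noting that $R\equiv R_c$ reduces the translational/link equations to the simplified model of Section~\ref{sec:LCS}, simply invokes Tikhonov's theorem \cite[Thm 9.3]{Kha96} rather than carrying out the composite-Lyapunov computation $\mathcal{V}=\mathcal{V}_1+\mathcal{V}_2$ explicitly. Your bookkeeping on $\Delta=-fRe_3-A$ and on the forcing through $\Omega_c,\dot\Omega_c$ is exactly what one would need to verify the regularity hypotheses behind that invocation, so your route is a more self-contained version of the same argument; the alternative you mention at the end (linearize, exhibit the block singularly-perturbed structure, cite the standard theorem) is essentially what the paper does.
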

\begin{proof}
See Appendix A.
\end{proof}

\section{Numerical Example}\label{sec:NE}

The desirable properties of the proposed control system are illustrated by a numerical example. Properties of a quadrotor are chosen as
\begin{align*}
m=0.5\,\mathrm{kg},\quad J=\mathrm{diag}[0.557,\,0.557,\,1.05]\times 10^{-2}\,\mathrm{kgm^2}.
\end{align*}
Five identical links with $n=5$, $m_i=0.1\,\mathrm{kg}$, and $l_i=0.1\,\mathrm{m}$ are considered. Controller parameters are selected as follows: $k_x=12.8$, $k_v=4.22$, $\frac{k_R}{\epsilon^2}=0.65$, $\frac{k_\Omega}{\epsilon} = 0.11$, $k_q=[11.01,\,6.67,\,1.97,\,0.41,\,0.069]$ and $k_\omega=[0.93,\,0.24,\,0.032,\,0.030,\,0.025]$.

The desired location of the quadrotor is selected as $x_d=0_{3\times 1}$. The initial conditions for the quadrotor are given by
\begin{gather*}
x(0)=[0.6;-0.7;0.2], \ \dot{x}(0)=0_{3\times 1},\\
R(0)=I_{3\times 3},\quad \Omega(0)=0_{3\times 1}.
\end{gather*}
The initial direction of the links are chosen such that the cable is curved along the horizontal direction, as illustrated at Figure \ref{fig:fisrt_sub11}, and the initial angular velocity of each link is chosen as zero. 

\begin{figure}
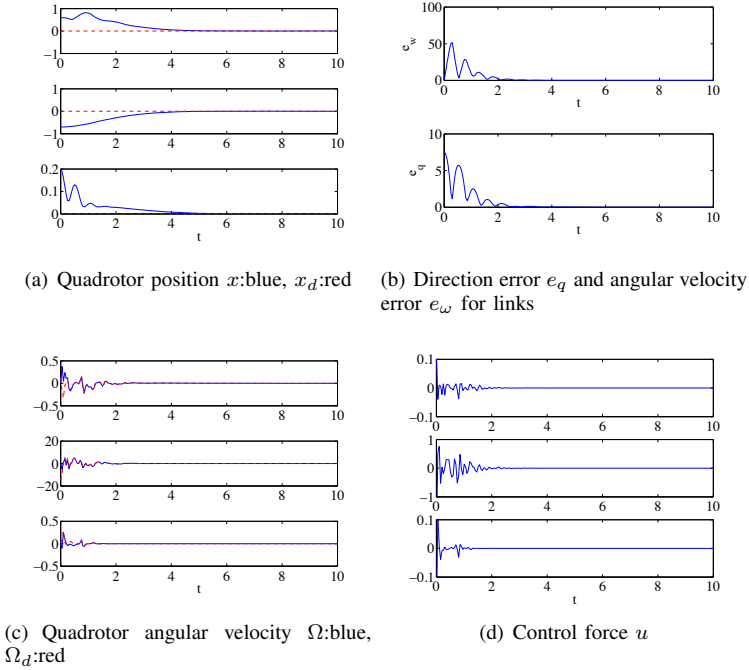

\centerline{
	\subfigure[Quadrotor position $x$:blue, $x_{d}$:red]{
		\includegraphics[width=0.55\columnwidth]{Position_sim.pdf}}
	\subfigure[Direction error $e_{q}$ and angular velocity error $e_{\omega}$ for links]{
		\includegraphics[width=0.55\columnwidth]{Errors_sim.pdf}}
}
\centerline{
	\subfigure[Quadrotor angular velocity $\Omega$:blue, $\Omega_{d}$:red]{
		\includegraphics[width=0.55\columnwidth]{Wd_sim.pdf}}
	\subfigure[Control force $u$]{
		\includegraphics[width=0.55\columnwidth]{u_sim.pdf}}
}
\caption{Stabilization of a payload connected to a quadrotor with 5 links}\label{fig:simresults}
\end{figure}

We define the following two error functions to the performance of the proposed control system:
\begin{align}
e_{q}=\sum_{i=1}^{n}{\|q_{i}-e_3\|},\quad e_{\omega}=\sum_{i=1}^{n}{\|\omega_{i}\|}
\end{align}
Simulation results are illustrated at Figure \ref{fig:simresults}, where the position $x$ of the quadrotor converges to the desired value $x_d$, while reducing the direction error and the angular velocity error of the link. The corresponding maneuvers of the quadrotor and the links are illustrated by snapshots at Figure \ref{animationsim}. 

\begin{figure}
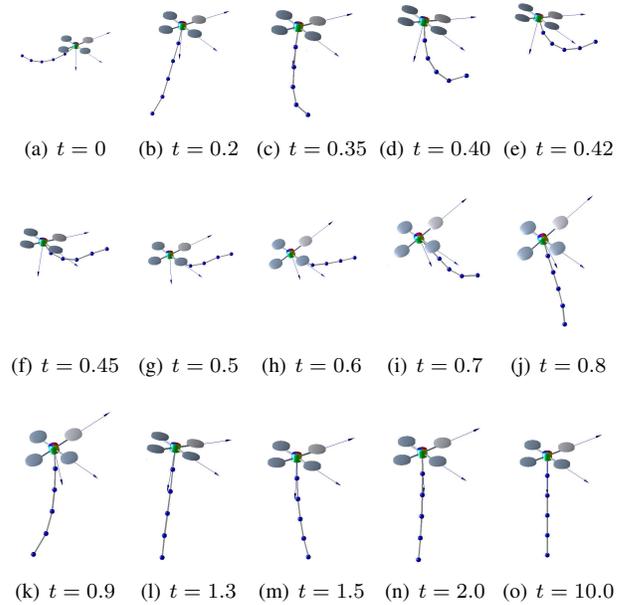

\centering
\subfigure[$ t =0 $]
{
\includegraphics[width=0.5in]{1.pdf}
\label{fig:fisrt_sub11}
}
\subfigure[$ t =0.2 $]
{
\includegraphics[width=0.5in]{6.pdf}
}
\subfigure[$ t =0.35 $]
{
\includegraphics[width=0.5in]{7.pdf}
}
\subfigure[$ t =0.40 $]
{
\includegraphics[width=0.5in]{8.pdf}
}
\subfigure[$ t =0.42 $]
{
\includegraphics[width=0.5in]{9.pdf}
}\\
\subfigure[$ t =0.45 $]
{
\includegraphics[width=0.5in]{10.pdf}
}
\subfigure[$ t =0.5 $]
{
\includegraphics[width=0.5in]{11.pdf}
}
\subfigure[$ t =0.6 $]
{
\includegraphics[width=0.5in]{13.pdf}
}
\subfigure[$ t =0.7 $]
{
\includegraphics[width=0.5in]{15.pdf}
}
\subfigure[$ t =0.8 $]
{
\includegraphics[width=0.5in]{17.pdf}
}\\\subfigure[$ t =0.9 $]
{
\includegraphics[width=0.5in]{19.pdf}
}
\subfigure[$ t =1.3 $]
{
\includegraphics[width=0.5in]{27.pdf}
}
\subfigure[$ t =1.5 $]
{
\includegraphics[width=0.5in]{30.pdf}
}
\subfigure[$ t =2.0 $]
{
\includegraphics[width=0.5in]{40.pdf}
}
\subfigure[$ t =10.0 $]
{
\includegraphics[width=0.5in]{200.pdf}
}
\caption{Snapshots of the controlled maneuver}
\label{animationsim}
\end{figure}

%
%

\section{Preliminary Experimental Results}\label{sec:ER}
Preliminary experimental results are presented. A quadrotor UAV is developed with the following configuration, as illustrated at Figure \ref{fig:QuadHW}:
\begin{itemize}
\item Gumstix Overo computer-in-module (OMAP 600MHz processor), running a non-realtime Linux operating system. It is connected to a ground station via WIFI.
\item Microstrain 3DM-GX3 attitude sensor, connected to Gumstix via UART.
\item BL-CTRL 2.0 motor speed controller, connected to Gumstix via I2C.
\item Roxxy 2827-35 Brushless DC motors.
\item XBee RF module, connected to Gumstix via UART.
\end{itemize}
The mass of the quadrotor is $m=0.755\,\mathrm{kg}$. A payload with mass of $m_1=0.036\ \mathrm{kg}$ is attached to the quadrotor via a cable of length $l_1=0.5\ \mathrm{m}$. For the given preliminary experiments, the cable is modeled by a single link, i.e., $n=1$. The locations of the quadrotor and the payload are measured by a VICON motion capture system, and they are transferred to the onboard computer module via XBee to estimate the full state and compute the control inputs. 

\begin{figure}
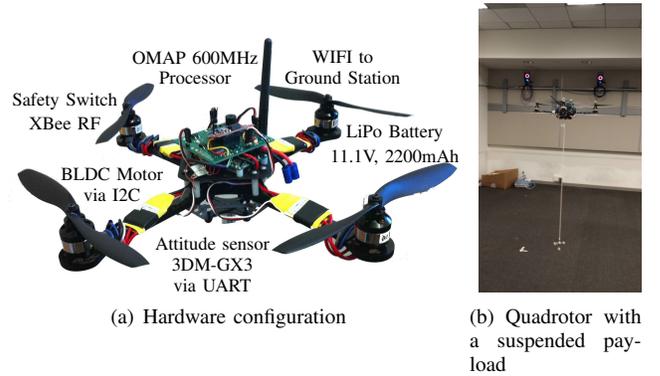

\centerline{
	\subfigure[Hardware configuration]{
\setlength{\unitlength}{0.1\columnwidth}\scriptsize
\begin{picture}(7,4)(0,0)
\put(0,0){\includegraphics[width=0.7\columnwidth]{Quad.jpg}}
\put(1.95,3.2){\shortstack[c]{OMAP 600MHz\\Processor}}
\put(2.3,0){\shortstack[c]{Attitude sensor\\3DM-GX3\\ via UART}}
\put(0.85,1.4){\shortstack[c]{BLDC Motor\\ via I2C}}
\put(0.1,2.5){\shortstack[c]{Safety Switch\\XBee RF}}
\put(4.3,3.2){\shortstack[c]{WIFI to\\Ground Station}}
\put(5,2.0){\shortstack[c]{LiPo Battery\\11.1V, 2200mAh}}
\end{picture}}
	\subfigure[Quadrotor with a suspended payload]{
	\includegraphics[width=0.25\columnwidth]{quad_payload.jpg}}
}
\caption{Hardware development for a quadrotor UAV}\label{fig:QuadHW}
\end{figure}

Two cases are considered and compared. For the first case, a position control system developed in~\cite{GooLeePECC13}, for quadrotor UAV that does not include the dynamics of the payload and the link, is applied to hover the quadrotor at the desired location, and the second case, the proposed control system is used.

\begin{figure}
\centerline{
\subfigure[Case I: quadrotor position control system~\cite{GooLeePECC13}]
{\includegraphics[width=1\columnwidth]{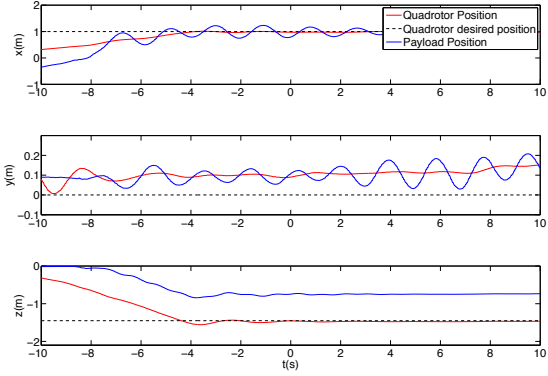}}}
\centerline{
\subfigure[Case II: the proposed control system for quadrotor with a suspended payload]
{\includegraphics[width=1.0\columnwidth]{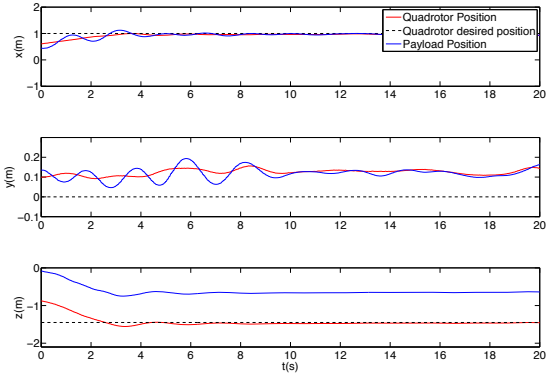}}}
\caption{Preliminary experimental results $x_d$:black, $x$:red, $x+l_1q_1$:blue}
\label{expresultsp}
\end{figure}
Experimental results are shown at Figures \ref{expresultsp} and \ref{expresultsq}. The position of the quadrotor and the payload is compared with the desired position of the quadrotor at Figure~\ref{expresultsp}, and the deflection angle of the link from the vertical direction are illustrated at Figure~\ref{expresultsq}. It is shown that the proposed control system reduces the undesired oscillation of the link effectively, compared with the quadrotor position control system that does not include the dynamics of links and payload.\footnote{A short video file of the experiments is also available at the experiment section of \url{http://fdcl.seas.gwu.edu/}.}

\begin{figure}
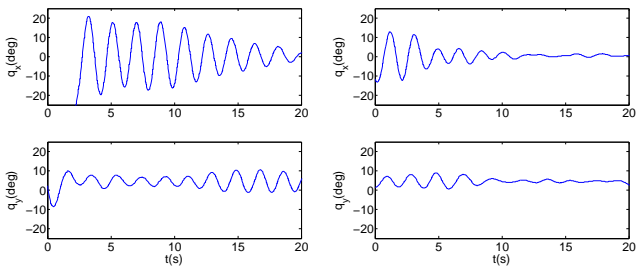

\subfigure[Case I: quadrotor position control system~\cite{GooLeePECC13}]
{\includegraphics[width=0.49\columnwidth]{q_WO.pdf}}
\subfigure[Case II: the proposed control system for quadrotor with a suspended payload]
{\includegraphics[width=0.49\columnwidth]{q_W.pdf}}
\caption{Preliminary experimental results: link deflection angles}
\label{expresultsq}
\end{figure}


\section{Conclusions}
Euler-Lagrange equations have been used for the quadrotor and the chain pendulum to model a flexible cable transporting a load in 3D space. These derivations developed in a remarkably compact form which allow us to choose arbitrary number and any configuration of the links. We developed a geometric nonlinear controller to stabilize the links below the quadrotor in the equilibrium position from an any chosen initial condition. We expanded these derivations in such way that there is no need of using local angle coordinate and this advantageous technique signalize our derivations.

\appendix
\section*{APPENDIX}
\subsection{Proof for Proposition \ref{prop:FDM}}\label{sec:PfFDM}
From \refeqn{KE} and \refeqn{PE}, the Lagrangian is given by
\begin{align}
L&=\frac{1}{2}M_{00}\|\dot{x}\|^{2}+\dot{x}\cdot\sum^{n}_{i=1}{M_{0i}\dot{q}_i}+\frac{1}{2}\sum_{i=1}^{n}{m_{i}\|\sum_{a=1}^{i}{l_{a}\dot{q}_{a}}\|^{2}}\nonumber \\
&\quad+\sum^{n}_{i=1}\sum^{n}_{a=i}m_{a}gl_{i}e_{3}\cdot q_{i}+M_{00}ge_{3}\cdot x+\frac{1}{2}\Omega^{T}J\Omega,
\end{align}
The derivatives of the Lagrangian are given by
\begin{align*}
\D_x L & = M_{00} g e_3,\\
\D_{\dot x} L & = M_{00} \dot x + \sum_{i=1}^n M_{0i}\dot q_i,
\end{align*}
where $\D_x L$ represents the derivative of $L$ with respect to $x$. From the variation of the angular velocity given after \refeqn{delR}, we have
\begin{align}
\D_{\Omega}L\cdot\delta\Omega=J\Omega\cdot(\dot\eta+\Omega\times\eta)
=J\Omega\cdot\dot\eta- \eta\cdot(\Omega\times J\Omega).
\end{align}
Similarly from \refeqn{delqi}, the derivative of the Lagrangian with respect to $q_i$ is given by
\begin{align*}
\D_{q_i} L \cdot \delta q_i = \sum_{a=i}^n m_a gl_ie_3\cdot (\xi_i\times q_i) 
= -\sum_{a=i}^n m_a gl_i\hat e_3 q_i\cdot \xi_i.
\end{align*}
The variation of $\dot q_i$ is given by
\begin{align*}
\delta\dot q_i = \dot \xi_i\times q_i + \xi_i\times q_i.
\end{align*}
Using this, the derivative of the Lagrangian with respect to $\dot q_i$ is given by
\begin{align*}
&\D_{\dot q_i}L\cdot \delta \dot q_i  = (M_{i0}\dot x + \sum_{j=1}^n M_{ij}\dot q_j) \cdot \delta \dot q_i \\
& = (M_{i0}\dot x + \sum_{j=1}^n M_{ij}\dot q_j) \cdot (\dot \xi_i \times q + \xi_i \times \dot q_i)\\
& = \hat q_i (M_{i0}\dot x + \sum_{j=1}^n M_{ij}\dot q_j)\cdot \dot\xi_i +
\hat{\dot q}_i (M_{i0}\dot x + \sum_{j=1}^n M_{ij}\dot q_j)\cdot \xi_i.
\end{align*}

Let $\mathfrak{G}$ be the action integral, i.e., $\mathfrak{G}=\int_{t_0}^{t_f} L\,dt$. From the above expressions for the derivatives of the Lagrangian, the variation of the action integral can be written as
\begin{align*}
\delta \mathfrak{G} &= \int_{t_0}^{t_f} \{M_{00} \dot x + \sum_{i=1}^n M_{0i}\dot q_i\}\cdot \delta \dot x
+M_{00}ge_3\cdot\delta x\\
& +\sum_{i=1}^n \{\hat q_i (M_{i0}\dot x + \sum_{j=1}^n M_{ij}\dot q_j)\}\cdot \dot \xi_i\\
 &+ \sum_{i=1}^n\{\hat{\dot q}_i (M_{i0}\dot x + \sum_{j=1}^n M_{ij}\dot q_j)-\sum_{a=i}^n m_a gl_i\hat e_3 q_i \}\cdot \xi_i\\
 &+ J\Omega\cdot\dot\eta- \eta\cdot(\Omega\times J\Omega)\,dt.
\end{align*}
Integrating by parts and using the fact that variations at the end points vanish, this reduces to
\begin{align*}
\delta \mathfrak{G} &= \int_{t_0}^{t_f} \{M_{00}ge_3 -M_{00} \ddot x - \sum_{i=1}^n M_{0i}\ddot q_i\}\cdot \delta x \\
&+ \sum_{i=1}^n\{-\hat q_i (M_{i0}\ddot x + \sum_{j=1}^n M_{ij}\ddot q_j)-\sum_{a=i}^n m_a gl_i\hat e_3 q_i \}\cdot \xi_i\\
& - \eta\cdot(J\dot\Omega+\Omega\times J\Omega) \,dt.
\end{align*}
According to the Lagrange-d'Alembert principle, the variation of the action integral is equal to the negative of the virtual work done by the external force and moment, namely
\begin{align*}
-\int_{t_0}^{t_f} -fRe_3\cdot\delta x - M\cdot \eta\, dt. 
\end{align*}
Therefore, we obtain \refeqn{xddot} and \refeqn{Wdot}. As $\xi_i$ is perpendicular to $q_i$, we also have
\begin{gather}
-\hat q_i^2 (M_{i0}\ddot x + \sum_{j=1}^n M_{ij}\ddot q_j)+\sum_{a=i}^n m_a gl_i\hat q_i^2 e_3=0.\label{eqn:qddot0}
\end{gather}
Equation \refeqn{qddot0} is rewritten to obtain an explicit expression for $\ddot q_i$. As $q_i\cdot \dot q_i =0$, we have $\dot q_i \cdot \dot q_i +q_i\cdot \ddot q_i=0$. Using this, we have
\begin{align*}
-\hat q_i^2 \ddot q_i = -(q_i\cdot \ddot q_i )q_i + (q_i\cdot q_i)\ddot q_i =(\dot q_i \cdot \dot q_i) q_i + \ddot q_i.
\end{align*}
Substituting this into \refeqn{qddot0}, we obtain \refeqn{qddot}.

This can be slightly rewritten in terms of the angular velocities. Since $\dot q_i = \omega_i\times q_i$ for the angular velocity $\omega_i$ satisfying $q_i\cdot\omega_i=0$, we have
\begin{align*}
    \ddot q_i & = \dot \omega_i \times q_i + \omega_i\times (\omega_i\times q_i) \\
    &= \dot \omega_i \times q_i - \|\omega_i\|^2q_i=-\hat q_i \dot\omega_i - \|\omega_i\|^2q_i.
\end{align*}
Using this and the fact that $\dot\omega_i\cdot q_i=0$, we obtain \refeqn{ELwm}.


The variations of $x,u$ and $q$ are given by \refeqn{delxLin} and \refeqn{delqLin}. From the kinematics equation $\dot q_i=\omega_i\times q_i$, $\delta\dot q_i$ is given by
\begin{align*}
\delta \dot q_i = \dot\xi_i \times e_3 =\delta\omega_i \times e_3 + 0\times (\xi_i\times e_3)=\delta\omega_i \times e_3.
\end{align*}
Since both sides of the above equation is perpendicular to $e_3$, this is equivalent to $e_3\times(\dot\xi_i\times e_3) = e_3\times(\delta\omega_i\times e_3)$, which yields
\begin{gather*}
\dot \xi - (e_3\cdot\dot\xi) e_3 = \delta\omega_i -(e_3\cdot\delta\omega_i)e_3.
\end{gather*}
Since $\xi_i\cdot e_3 =0$, we have $\dot\xi\cdot e_3=0$. As $e_3\cdot\delta\omega_i=0$ from the constraint, we obtain the linearized equation for the kinematics equation:
\begin{align}
\dot\xi_i = \delta\omega_i.\label{eqn:dotxii}
\end{align}
Substituting these into \refeqn{ELwm}, and ignoring the higher order terms, we obtain \refeqn{Lin}.

\subsection{Proof for Proposition \ref{prop:FDMMM}}\label{sec:PfFDMMM}

This proof is based on singular perturbation~\cite{Kha96} and the attitude tracking control system developed in~\cite{LeeLeoPICDC10}. Let $\bar e_{R}=\frac{1}{\epsilon}e_{R}$. The error dynamics for $\bar e_{R},e_{\Omega}$ can be written as
The error dynamics for $e_{R_i},e_{\Omega_i}$ are given by
\begin{align*}
\dot e_{R_i} & = \frac{1}{2}(\tr{R_i^T R_{c_i}}I-R_i^T R_{c_i}) e_{\Omega_i},\\
J_i\dot e_{\Omega_i} & = -\frac{k_R}{\epsilon^2} e_{R_i} -\frac{k_\Omega}{\epsilon} e_{\Omega_i}.
\end{align*}
Define $\bar e_{R_i}=\frac{1}{\epsilon}e_{R_i}$ to rewrite these as the standard form of singular perturbation:
\begin{align*}
\epsilon \dot{\bar e}_{R} & = \frac{1}{2}(\tr{R^T R_{c}}I-R_i^T R_{c}) e_{\Omega},\\
\epsilon \dot e_{\Omega} & = J^{-1}(-k_R \bar e_{R} -k_\Omega e_{\Omega}).
\end{align*}
The right-hand side of the above equations has an isolated root of $(\bar e_{R},e_{\Omega})=(0,0)$, and they correspond to the \textit{boundary-layer} system. And, the origin of the boundary-layer system is exponentially stable according to~\cite[Proposition 1]{LeeLeoPICDC10}.

More explicitly, define a configuration error function on $\SO$ as follows:
\begin{align*}
\Psi_R = \frac{1}{2}\tr{I- R_c^T R}.
\end{align*}
Consider a domain given by $D_R=\{(R,\Omega)\in\SO\times\Re^3\,|\, \Psi_R < \psi_R < 2\}$. Define a Lyapunov function,
\begin{align*}
\mathcal{W} & = \frac{1}{2} e_\Omega\cdot J e_\Omega + \frac{k_R}{\epsilon^2} \Psi_R + \frac{c_3}{\epsilon} e_R\cdot e_\Omega,
\end{align*}
where $c_3$ is a positive constant satisfying
\begin{align*}
c_3 < \min \braces{ \sqrt{k_R\lambda_m(J)},\,\frac{4k_Rk_\Omega\lambda_{m}^2(J)}{k_\Omega^2\lambda_M(J)+4 k_R\lambda_m^2(J)}}.
\end{align*}
We can show that
\begin{align*}
\zeta^T L_1 \zeta \leq \mathcal{W} \leq \zeta^T L_2 \zeta,
\end{align*}
where $\zeta=[\|\bar e_R\|,\, \|e_\Omega\|]\in\Re^2$ and the matrices $L_1,L_2\in\Re^{2\times 2}$ are given by
\begin{align*}
L_1 = \begin{bmatrix} \frac{k_R}{2} & - \frac{c_3}{2} \\ -\frac{c_3}{2} & \frac{\lambda_m(J)}{2}\end{bmatrix},\quad
L_2 = \begin{bmatrix} \frac{k_R}{2-\psi_R} & \frac{c_3}{2} \\ \frac{c_3}{2} & \frac{\lambda_M(J)}{2}\end{bmatrix}.
\end{align*}
The time-derivative of $\mathcal{W}$ can be written as
\begin{align*}
\epsilon\dot{\mathcal{W}} & = (e_\Omega+ c_3 J^{-1}\bar e_R)\cdot (-k_R \bar e_R -k_\Omega e_\Omega)\\
&\quad + k_R \bar e_R\cdot e_\Omega + c_3 \dot e_R \cdot e_\Omega \leq - \zeta^T U \zeta,
\end{align*}
where the matrix $U\in\Re^{2\times 2}$ is 
\begin{align*}
U = \begin{bmatrix} \frac{c_3 k_R}{\lambda_M(J)} & -\frac{c_3 k_\Omega}{2\lambda_m(J)}\\
-\frac{c_3 k_\Omega}{2\lambda_m(J)} & k_\Omega-c_3
  \end{bmatrix}.
\end{align*}
The condition on $c_3$ guarantees that all of matrices $L_1,L_2,U$ are positive-definite. Therefore, the zero equilibrium of the tracking errors $(\bar e_R,e_\Omega)$ is exponentially stable, and the convergence rate is proportional to $\frac{1}{\epsilon}$. 

Next, we consider the \textit{reduced system}, which corresponds the full dynamic model when $R\equiv R_{c}$. From \refeqn{fi} and \refeqn{b3c}, the total thrust of quadrotor when $R=R_{c}$ is given by
\begin{align*}
-A \cdot R e_3 = (A\cdot R_{c} e_3) R_{c} e_3 = 
(A\cdot -\frac{A}{\|A\|}) -\frac{A}{\|A\|} = A.
\end{align*}
Therefore, the reduced system is given by the controlled dynamics for the simplified dynamic model discussed at Section \ref{sec:LCS}, which is exponentially stable. 

Then, according to Tikhonov's theorem~\cite[Thm 9.3]{Kha96}, there exists $\epsilon^* >0$ such that for all $\epsilon<\epsilon^*$, the origin of the full dynamics model is exponentially stable. 

\bibliography{BibMaster}
\bibliographystyle{IEEEtran}

\end{document}